\newtheorem{theorem}{Theorem}[section]
\numberwithin{equation}{section}
\newtheorem{lemma}[theorem]{Lemma}
\newtheorem{proposition}[theorem]{Proposition}
\newtheorem{conjecture}[theorem]{Conjecture}
\numberwithin{equation}{section}
\def\N{\mathbb{N}}
\def\Z{\mathbb{Z}}
\def\Q{\mathbb{Q}}
\def\R{\mathbb{R}}
\def\E{\mathbb{E}}
\def\bP{\mathbb{P}}
\def\CC{\mathcal{C}}
\def\FF{\mathcal{F}}
\def\V{\mathcal{V}}
\def\RR{\mathcal{R}}
\def\OO{\mathcal{O}}
\renewcommand{\phi}{\varphi}
\renewcommand{\epsilon}{\varepsilon}
\def\reff#1{(\ref{#1})}
\title{\bf On the nature of the 
Swiss cheese in dimension 3.}
\begin{document}

\author{Amine Asselah \thanks{
Universit\'e Paris-Est Cr\'eteil; amine.asselah@u-pec.fr} \and
Bruno Schapira\thanks{Aix-Marseille Universit\'e, CNRS, Centrale Marseille, I2M, UMR 7373, 13453 Marseille, France;  bruno.schapira@univ-amu.fr} }
\date{}

\maketitle

\begin{abstract}
We study scenarii linked with the {\it Swiss cheese picture} in dimension three 
obtained when two random walks are forced to meet often,
or when one random walk is forced to squeeze its range.
In the case of two random walks, we show that they most likely
meet in a region of {\it optimal density}. In the case
of one random walk, we show that
a small range is reached by a strategy uniform in time.
Both results rely on an original inequality
estimating the cost of visiting sparse sites, and in the case of one random walk on the precise Large Deviation Principle of van den Berg, Bolthausen and den Hollander \cite{BBH}, including their sharp estimates
 of the rate functions in the neighborhood of the origin. \\

\noindent \emph{Keywords and phrases.} Random Walk, Large Deviations, Range.\\
MSC 2010 \emph{subject classifications.} Primary 60F10, 60G50.
\end{abstract}

\section{Introduction.}
In this note, we are concerned with 
describing the geometry of the range of a random walk 
on $\Z^3$, when forced to having a small 
volume, deviating from its mean by a small fraction of it, 
or to intersecting often the range of another independent random walk.

These issues were raised in two landmark
papers of van den Berg, Bolthausen, and den Hollander (referred to as BBH in
the sequel) written two decades ago \cite{BBH} \cite{BBH2}.
Both papers dealt with the continuous counterpart of the range of a random walk,
the Wiener sausage. They showed a Large Deviation Principle,
in two related contexts: (i) in \cite{BBH} for the downward
deviation of the volume of the sausage, 
(ii) in \cite{BBH2} for the upward deviation of the 
volume of the intersection of two independent sausages. They also 
expressed the rate function with a variational formula.

Their sharp asymptotics are followed 
with a heuristic description of the optimal scenario 
dubbed the {\it Swiss cheese} picture where, in case (i), the
{\it Wiener sausage covers only part of the space leaving
random holes whose sizes are of order 1 and whose density varies
on space scale $t^{1/d}$}, and in case (ii) both Wiener 
sausages form apparently independent Swiss cheeses. 
However, they acknowledge that to show
that conditioned on the deviation, the sausages {\it actually follow
the Swiss cheese strategy requires substantial extra work}.

Remarkably the Swiss cheese heuristic also highlight a 
crucial difference between dimension $3$ and dimensions $5$ and higher. 
Indeed, in dimension three the typical scenario is time homogeneous, 
in the sense that the Wiener sausage considered up to time $t$, 
would spend all its time 
localized in a region of typical scale $(t/\varepsilon)^{1/3}$, 
filling a fraction of order $\varepsilon$ of every volume element, 
when the deviation of the volume occurs by a 
fraction $\varepsilon$ of the mean. On the other hand 
in dimension $5$ and higher the typical scenario would be
time-inhomogeneous: the Wiener sausage would localize 
in a smaller region of space with scale of order $(\varepsilon t)^{1/d}$, 
only during a fraction $\varepsilon$ of its time, and
therefore would produce a localization region where
the density is of order one, no matter how small $\epsilon$ is. 

Recently Sznitman \cite{S} suggested 
that the Swiss cheese could be described 
in terms of so-called tilted Random Interlacements. 
In the same time, in \cite{AS17} we obtained a first result in the discrete setting, 
which expressed the folding of the range of a random walk in terms of its capacity. 
More precisely we showed that a positive fraction of the path, considered up to some time $n$, was 
spent in a subset having 
a volume of order $n$ and a capacity of order $n^{1- 2/d}$, 
that is to say of the same order as the capacity of a ball with volume $n$. 

In this note, we present a simple and powerful estimate on the
probability a random walk visits sites which are 
{\it far from each others}, see Proposition \ref{lemLambda} below. 
We then deduce two applications in 
dimension three that we state vaguely as follows.
\begin{itemize}
\item One random walk, when forced to have a small range, 
{\it folds in a time-homogeneous way}.
\item Two random walks, when forced to meet often, 
do so in a region of {\it optimal density}.
\end{itemize}
To state our results more precisely, we introduce some notation. 
We denote by $\{S_n\}_{n\ge 0}$ the simple random walk on $\Z^d$ (in most of the paper $d=3$),  and by 
$\RR(I):=\{S_n\}_{n\in I}$, its range in 
a time window $I\subset \N$, which we simply write as $\RR_n$, when $I$ is the interval $I_n:=\{0,\dots,n\}$.
We let $Q(x,r):=(x+[-r,r[^d)\cap \Z^d$, be the cube of side length $2r$, and then define 
regions of {\it high density} as follows. First we define the (random) set of centers, depending on a density $\rho$, a scale $r$, and a time window $I$ as: 
\begin{equation*}
\CC(\rho,r,I):= 
\{x\in 2r\Z^d\, :\, |\RR(I)\cap Q(x,r)|\ge \rho \cdot |Q(x,r)|\},
\end{equation*}
and then the corresponding region 
\begin{equation*}
\V(\rho,r,I):= \bigcup_{x\in \CC(\rho,r,I)} Q(x,r),
\end{equation*}
which we simply write $\V_n(\rho,r)$, when $I=I_n$. 
Thus, $\RR(I)\cap \V(\rho,r,I)$ is the set of visited sites around
which the range has density, on a scale $r$, above $\rho$ in the time window $I$.

Our first result concerns the problem of forcing a single random walk having a small range. 
For $\varepsilon\in (0,1)$, we denote by $\Q_n^\varepsilon$ the law of the random walk conditionally on the 
event~$\{|\RR_n|-\E[|\RR_n|] \le - \varepsilon n\}$. 

\begin{theorem}\label{theo-appli-2} 
Assume $d=3$.
There exist positive constants $\beta$, $K_0$ and $\varepsilon_0$, 
such that for any $\varepsilon\in (0,\varepsilon_0)$, and 
any $1\le r\le \varepsilon^{5/6}  n^{1/3}$, 
\begin{equation*}
\lim_{n\to \infty} 
\Q_n^\varepsilon\left[
|\RR(I)\cap \RR(I^c)\cap \V_n(\beta \varepsilon,r)| 
\ge \frac{\varepsilon}{8} |I|,
\ \text{for all intervals } I\subseteq I_n,\text{ with } |I|=\lfloor K_0\varepsilon n\rfloor \right]=1.
\end{equation*}
\end{theorem}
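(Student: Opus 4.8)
The plan is a quantitative form of the contrapositive. Write $D:=\{|\RR_n|-\E[|\RR_n|]\le-\varepsilon n\}$, so that $\Q_n^\varepsilon=\mathbb{P}(\,\cdot\mid D)$, put $\kappa:=\lim_n\E[|\RR_n|]/n\in(0,1)$, and for an interval $I=\{a,\dots,b\}\subseteq I_n$ with $b-a=\ell:=\lfloor K_0\varepsilon n\rfloor$ let $B_I:=\{|\RR(I)\cap\RR(I^c)\cap\V_n(\beta\varepsilon,r)|<\tfrac\varepsilon8\ell\}$ be the failure of the theorem's event for that particular $I$. Since at most $n$ intervals $I$ occur, it suffices to show $\max_I\mathbb{P}(B_I\cap D)=o(\mathbb{P}(D)/n)$ as $n\to\infty$. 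By the large deviation lower bound of \cite{BBH}, transferred from the Wiener sausage to the lattice range in the standard way, $\mathbb{P}(D)\ge\exp(-c(\varepsilon)\,n^{1/3}(1+o(1)))$, where $c(\varepsilon)$ is the corresponding BBH rate; the two facts I will use about it are $c(\varepsilon)\to0$ and the sharp near-origin estimate $c(\varepsilon)=O(\varepsilon^{1/3})$, both from \cite{BBH}, and $\varepsilon_0$ is taken small enough for the resulting slack.

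\textbf{Step 1: the range concentrates in dense cubes.} Let $\Lambda_n:=\RR_n\setminus\V_n(\beta\varepsilon,r)$, the set of visited sites lying in cubes $Q(x,r)$ ($x\in2r\Z^3$) of $\RR_n$-density below $\beta\varepsilon$; this is a set that is "sparse at scale $r$" in the sense handled by Proposition~\ref{lemLambda}, which is efficient precisely in the regime $r\le\varepsilon^{5/6}n^{1/3}$ and there gives $\mathbb{P}(|\Lambda_n|>\delta_0 n)\le\exp(-C(\varepsilon)n^{1/3})$ with $C(\varepsilon)/c(\varepsilon)\to\infty$, for a suitable $\delta_0=\delta_0(\varepsilon)$ with $\delta_0/\varepsilon^2\to0$. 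Hence $\mathbb{P}(D\cap\{|\Lambda_n|>\delta_0 n\})=o(\mathbb{P}(D))$, and we may work on $G:=\{|\Lambda_n|\le\delta_0 n\}$; in particular $|\RR(I)\setminus\V_n(\beta\varepsilon,r)|\le\delta_0 n$.

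\textbf{Step 2: Markov decomposition and a dichotomy on the inner walk.} Fix $I$ and condition on $(S_a,S_b)$: then the inner trajectory $(S_k)_{k\in I}$ and the outer trajectory $(S_k)_{k\in I^c}$ become independent, and $\RR(I^c)$ — as well as the cubes where $\RR(I^c)$ is dense — depends on the outer trajectory only. On $G\cap B_I$, combining the defining inequalities of $B_I$ and $G$ gives $|\RR(I)\cap\RR(I^c)|<\tfrac\varepsilon8\ell+\delta_0 n$ while almost all of $\RR(I)$ lies in $\RR_n$-dense cubes. Now fix $\eta_0\in(0,\kappa)$ and split. If $|\RR(I)|<\eta_0\ell$, the inner walk (of length $\ell\to\infty$) has an anomalously small range, so by the BBH upper bound applied to it the conditional probability is at most $\exp(-c_1\ell^{1/3}(1+o(1)))=\exp(-c_1(K_0\varepsilon)^{1/3}n^{1/3}(1+o(1)))$ with $c_1>0$ fixed; since $c(\varepsilon)=O(\varepsilon^{1/3})$ this is $\le\exp(-2c(\varepsilon)n^{1/3})$ once $K_0$ is large and $\varepsilon<\varepsilon_0$. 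If instead $|\RR(I)|\ge\eta_0\ell$: a simple random walk passing through a region in which a fixed set has density $\gtrsim\beta\varepsilon$ revisits that set at rate $\gtrsim\beta\varepsilon$ per newly discovered site, so the near-disjointness $|\RR(I)\cap\RR(I^c)|<\tfrac\varepsilon8\ell+\delta_0 n$ is incompatible — for $\beta$ large in terms of $\kappa$ and $\delta_0/\varepsilon^2$ small — with more than a tiny fraction of $\RR(I)$ sitting in cubes where $\RR(I^c)$ is dense; hence at least $\tfrac12\eta_0\ell$ sites of $\RR(I)$ lie in $r$-cubes in which $\RR(I)$ itself carries density $\gtrsim\beta\varepsilon$, i.e.\ the inner walk is anomalously folded at scale $r$. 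Applying Proposition~\ref{lemLambda} (equivalently, the scale-$r$ confinement estimate behind the BBH upper bound) to the inner walk bounds this conditional probability by $\exp(-c_2\ell/r^2)\le\exp(-c_2K_0\varepsilon^{-2/3}n^{1/3})$, again $\le\exp(-2c(\varepsilon)n^{1/3})$ since $\varepsilon^{-2/3}\gg c(\varepsilon)$. Collecting the three bounds and integrating over $(S_a,S_b)$, $\mathbb{P}(B_I\cap D)\le\mathbb{P}(D\cap G^c)+2\exp(-2c(\varepsilon)n^{1/3})=o(\mathbb{P}(D)/n)$ uniformly in $I$, and a union bound over the $\le n$ intervals gives $\Q_n^\varepsilon(\bigcup_I B_I)\to0$, which is the assertion.

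\textbf{Main obstacle.} The delicate point is the second alternative: turning "a simple random walk revisits a well-spread density-$\rho$ set at rate $\gtrsim\rho$ per fresh site" into an inequality valid uniformly over the random collection of dense cubes of the outer walk — this is exactly what Proposition~\ref{lemLambda} must deliver, and it is there, together with the calibration of $\beta$, $K_0$ and $\varepsilon_0$ against the BBH rate function and its near-origin behaviour, that the restriction $r\le\varepsilon^{5/6}n^{1/3}$ is consumed (it guarantees $\ell/r^2\ge K_0\varepsilon^{-2/3}n^{1/3}\gg c(\varepsilon)n^{1/3}$). The remaining ingredients — transferring the BBH asymptotics from the Wiener sausage to $\RR_n$, the identity $\E[|\RR_n|]=\kappa n+o(n)$, and the uniformity of the $o(1)$'s in the relevant parameter ranges — are routine.
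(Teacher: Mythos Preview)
Your Step 1 contains a fatal error. You claim that Proposition~\ref{lemLambda} gives $\mathbb{P}(|\Lambda_n|>\delta_0 n)\le\exp(-C(\varepsilon)n^{1/3})$, where $\Lambda_n=\RR_n\setminus\V_n(\beta\varepsilon,r)$. But under the \emph{unconditioned} law $\mathbb{P}$, the walk is typically not folded at all: for fixed $\varepsilon$ the density of $\RR_n$ in essentially every cube $Q(x,r)$ with $r\le\varepsilon^{5/6}n^{1/3}$ is far below $\beta\varepsilon$, so in fact $\Lambda_n=\RR_n$ with overwhelming probability and $|\Lambda_n|\sim\kappa_3 n$. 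Thus the inequality you assert is simply false. The reason Proposition~\ref{lemLambda} does not apply here is that its hypothesis requires the sparse set $\Lambda$ to be fixed (or at least independent of the walk being measured), whereas your $\Lambda_n$ is a function of the very walk $\RR_n$ whose visits to it you are trying to bound. Since the event $G$ is therefore not available, the rest of your argument collapses: in particular the derivation ``on $G\cap B_I$, $|\RR(I)\cap\RR(I^c)|<\tfrac{\varepsilon}{8}\ell+\delta_0 n$'' has no force.

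The paper's proof is organised quite differently and hinges on an ingredient you do not have. One first writes $\overline{|\RR_n|}=\overline{|\RR(I)|}+\overline{|\RR(I^c)|}-\overline{|\RR(I)\cap\RR(I^c)|}$ and shows that $\mathbb{P}(\overline{|\RR(I)|}+\overline{|\RR(I^c)|}\le-(1-\tfrac{K_0\varepsilon}{3})\varepsilon n)=o(n^{-1}\mathbb{P}(D))$. This is the delicate step: it requires a version of the BBH upper bound for $|\RR_k\cup\widetilde{\RR}_{n-k}|$ with two independent walks started at \emph{arbitrary} points $0$ and $x$ (Proposition~\ref{propBBH}), together with a discretisation of the split between $\overline{|\RR(I)|}$ and $\overline{|\RR(I^c)|}$ and a careful comparison of $I_3(\widetilde\varepsilon_i)(K_0\varepsilon)^{1/3}+I_3(\varepsilon_i/(1+\varepsilon_i^2))(1-K_0\varepsilon)^{1/3}$ against $I_3(\varepsilon)$ using the sharp two-sided bound \eqref{Imud}. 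Only then does one know that, on $D$, necessarily $|\RR(I)\cap\RR(I^c)|\gtrsim K_0\varepsilon^2 n$. Proposition~\ref{lemLambda} is used at a \emph{different} place and in the correct direction: one conditions on the outer walk, lets $\Lambda_1$ (resp.\ $\Lambda_2$) be the cubes where $\RR[0,k]$ (resp.\ $\RR[\ell,n]$) is dense, and bounds the probability that the \emph{inner} walk $\RR(I)$ hits many points of the outer range lying outside $\Lambda_1\cup\Lambda_2$ --- here the sparse set is a function of the outer walk only, so independence makes the proposition applicable. Your second alternative in Step~2 gestures at this, but in the wrong direction (you try to extract a \emph{lower} bound on intersections with a dense set, which is not what Proposition~\ref{lemLambda} provides), and it rests on the unavailable event $G$.
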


This result expresses the fact that under $\Q_n^\varepsilon$, 
in any time interval of length 
of order $\varepsilon n$, the random walk intersects the 
other part of its range a fraction $\varepsilon$ of its  time, which is 
in agreement with the intuitive idea that, if during some time interval, the walk moves in a region with density of visited sites of order $\varepsilon$, it should intersect it 
a fraction $\varepsilon$ of its time. Note that it brings complementary information to the results obtained in \cite{AS17}, where it was shown that for some positive constants $\alpha$, $\beta$, $C$, and $\varepsilon_0\in (0,1)$, for any $\varepsilon\in (0,\varepsilon_0)$, and any
$C\varepsilon^{-5/9}n^{2/9}\log n \le r
\le \frac 1C\sqrt \varepsilon n^{1/3}$, 
\begin{equation}\label{AS17.3}
\lim_{n\to \infty} \, \Q_n^\varepsilon
\left[\exists \V \subseteq \V_n(\beta \varepsilon,r)\, :\, |\V\cap \RR_n|\ge \alpha  n,\text{ and }  \text{cap}(\V)
\le C(\frac n \varepsilon)^{1/3}\right]= 1,
\end{equation}
with $\text{cap}(\Lambda)$ being the capacity 
of a finite set $\Lambda\subset \Z^d$ 
(see for instance \cite{LL} for a definition). 
This is in a sense stronger than the result of Theorem \ref{theo-appli-2}, which only gives (by summation over disjoint intervals) that 
with high $\Q_n^\varepsilon$-probability, $|\RR_n\cap \V_n(\beta \varepsilon,r)| \ge \frac 18 \varepsilon n$.   
On the other hand \eqref{AS17.3} says nothing on the distribution of the times when the random walk visits the sets $\V_n(\beta \varepsilon,r)$,   
while Theorem \ref{theo-appli-2} 
shows that they are in a sense uniformly distributed. 
Both results are proved using different techniques: 
while \eqref{AS17.3} was obtained by using only elementary arguments, 
Theorem \ref{theo-appli-2} relies on the sharp and 
intricate results of \cite{BBH} (which have been obtained in the 
discrete setting by Phetpradap in his thesis \cite{P}).

Our second result concerns the problem of intersection of two independent ranges. For $n\ge 1$, and $\rho\in (0,1)$, we denote by $\widetilde \Q_n^\rho$ the law of two independent ranges $\RR_n$ and $\widetilde \RR_n$,
conditionally on the event $\{|\RR_n\cap \widetilde \RR_n|>\rho n\}$. 

\begin{theorem}\label{theo-appli-1}
Assume $d= 3$. There exist positive constants $c$ and $\kappa$, such that for any $n\ge 1$, $\rho\in (0,1)$, 
$\delta\in (0,1)$, and $r \le c\delta^{2/3}(\rho n)^{1/3}$,  
\begin{equation}\label{appli-2}
\lim_{n\to\infty}
\widetilde \Q_n^\rho \left[ | \RR_n\cap \widetilde \RR_n\cap 
\V_n( \delta \rho , r )|> (1-\kappa \delta)\, \rho n\right]=1.
\end{equation}
\end{theorem}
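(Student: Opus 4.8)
The plan is to play the cost of visiting a sparse set against the cost of the conditioning event. Abbreviate $p_n:=\bP[|\RR_n\cap\widetilde\RR_n|>\rho n]$, so that $\widetilde\Q_n^\rho[\,\cdot\,]=\bP[\,\cdot\,\cap\{|\RR_n\cap\widetilde\RR_n|>\rho n\}]/p_n$. If $\kappa\delta\ge 1$ the event in \eqref{appli-2} is automatic, so we may assume $\delta<1/\kappa$, with $\kappa$ a large constant to be fixed at the end. The first step is to extract the sparse set: the cubes $\{Q(x,r):x\in 2r\Z^3\}$ partition $\Z^3$, and by definition $\V_n(\delta\rho,r)$ is the union of those cubes with $|\RR_n\cap Q(x,r)|\ge\delta\rho(2r)^3$. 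Hence $\Lambda:=\RR_n\setminus\V_n(\delta\rho,r)=\RR_n\cap\V_n(\delta\rho,r)^c$ satisfies $|\Lambda\cap Q(x,r)|<\delta\rho(2r)^3$ for every $x\in 2r\Z^3$, i.e. it has density below $\delta\rho$ at scale $r$ uniformly over the grid, and $\Lambda$ is a function of $S$ alone. On $\{|\RR_n\cap\widetilde\RR_n|>\rho n\}$, failure of \eqref{appli-2} means $|\RR_n\cap\widetilde\RR_n\cap\V_n(\delta\rho,r)|\le(1-\kappa\delta)\rho n$; since $\RR_n\cap\widetilde\RR_n\cap\V_n(\delta\rho,r)^c=\widetilde\RR_n\cap\Lambda$, subtracting gives $|\widetilde\RR_n\cap\Lambda|>\kappa\delta\rho n$. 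Thus it suffices to bound the probability that the independent walk $\widetilde S$ visits more than $\kappa\delta\rho n$ points of a set that is $\delta\rho$-sparse at scale $r$ and independent of $\widetilde S$.

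This is exactly the situation handled by Proposition~\ref{lemLambda}. Conditioning on $S$ and applying it to the deterministic set $\Lambda$ (density $<\delta\rho$ at scale $r$, and $m=\kappa\delta\rho n$ target visits) gives, uniformly in $S$,
\[
\bP\big[\,|\widetilde\RR_n\cap\Lambda|>\kappa\delta\rho n\ \big|\ S\,\big]\ \le\ \exp\!\Big(-c_*\,\frac{\kappa\delta\rho\, n}{\delta\rho\,r^2}\Big)\ =\ \exp\!\Big(-c_*\kappa\,\frac{n}{r^2}\Big),
\]
so, taking expectations over $S$, the probability that $\{|\RR_n\cap\widetilde\RR_n|>\rho n\}$ holds while \eqref{appli-2} fails is at most $\exp(-c_*\kappa n/r^2)$. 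Inserting the hypothesis $r\le c\,\delta^{2/3}(\rho n)^{1/3}$ we get $n/r^2\ge c^{-2}\delta^{-4/3}\rho^{-2/3}n^{1/3}$, hence this probability is bounded by $\exp\big(-c_1\kappa\,\delta^{-4/3}\rho^{-2/3}\,n^{1/3}\big)$ for a constant $c_1>0$ depending only on $c$ and on the constant of Proposition~\ref{lemLambda}.

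It remains to bound $p_n$ from below, which I would do by an explicit folding: confine both walks, throughout $I_n$, to one common ball of radius $R\asymp(n/\rho)^{1/3}$, tuned so that the range of each confined walk is a fixed positive fraction of the ball's volume and the two (conditionally independent) ranges still overlap in at least $\rho n$ points with probability bounded away from $0$. Since a walk stays in a ball of radius $R$ up to time $n$ with probability $e^{-\Theta(n/R^2)}=e^{-\Theta(\rho^{2/3}n^{1/3})}$, this gives $p_n\ge e^{-C_0\rho^{2/3}n^{1/3}}$. Dividing, $\widetilde\Q_n^\rho$ of the failure event is at most $\exp\big(-(c_1\kappa\,\delta^{-4/3}\rho^{-2/3}-C_0\rho^{2/3})\,n^{1/3}\big)$; as $\delta,\rho\in(0,1)$ force $\delta^{-4/3}>1>\rho^{4/3}$, the bracket is positive once $\kappa>C_0/c_1$, which fixes $\kappa$ (and $c$ may be fixed first). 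Letting $n\to\infty$ proves \eqref{appli-2}.

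The crux, and the main obstacle, is this last comparison of rates uniformly in $\rho$. For $\rho$ bounded away from $1$ the ball-confinement lower bound is of the right order $e^{-\Theta(\rho^{2/3}n^{1/3})}$ and the comparison is immediate; but as $\rho\uparrow 1$ a single confined ball can no longer support an intersection as large as $\rho n$, and one must fold the two walks along essentially a common near-space-filling path in a ball of volume $\approx\rho n$, at cost of order $e^{-\Theta(\rho n)}$. In that regime Proposition~\ref{lemLambda} has to be invoked more carefully — for instance at a finer scale at which the relevant portion of the range is still forced to be sparse, or by also charging the cost of the first walk carrying a spread-out portion — so that the numerator stays below $p_n$. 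Carrying out the folding construction (estimating the confined range, its overlap with an independent copy, and all of this uniformly in $\rho$ and over the admissible range of $r$) is where the real work lies; the identification of $\Lambda$ and the single application of Proposition~\ref{lemLambda} are the routine part.
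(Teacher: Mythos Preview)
Your approach is essentially the paper's own: identify $\Lambda = \RR_n \cap \V_n(\delta\rho, r)^c$ as a $(\delta\rho)$-sparse set depending only on $S$, apply Proposition~\ref{lemLambda} (conditionally on $S$) to bound $\bP[|\widetilde\RR_n \cap \Lambda| > \kappa\delta\rho\, n]$, and compare with a lower bound on $p_n$ obtained by confining both walks to a ball of radius $\asymp (n/\rho)^{1/3}$. The paper gets this last lower bound $p_n \ge \exp(-c_0\,\rho^{2/3} n^{1/3})$ by a direct citation of Proposition~4.1 of \cite{AS17}, so what you flag as the ``crux'' and ``main obstacle'' is in fact dispatched by reference; your folding sketch is precisely the content of that cited result.

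There is, however, a computational slip in your use of Proposition~\ref{lemLambda}. In dimension $3$ the bound reads
\[
\bP\big[|\widetilde\RR_n\cap\Lambda|\ge t\big]\ \le\ \exp\Big(-(\delta\rho)^{1/3}\,\frac{t}{2r^2}\Big),
\]
not $\exp(-c_*\, t/(\delta\rho\, r^2))$ as you wrote (the density enters as $\rho^{1-2/d}=\rho^{1/3}$ in the numerator, not as $\rho^{-1}$). With $t=\kappa\delta\rho\, n$ and $r\le c\,\delta^{2/3}(\rho n)^{1/3}$ the correct exponent is
\[
(\delta\rho)^{1/3}\,\frac{\kappa\delta\rho\, n}{2r^2}\ \ge\ \frac{\kappa}{2c^2}\,\rho^{2/3}\,n^{1/3},
\]
so the $\delta$-dependence cancels completely and the rate matches that of the lower bound on $p_n$. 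The comparison is then immediate and \emph{uniform} in $\rho,\delta\in(0,1)$ as soon as one fixes $c$ with $\kappa/(2c^2)>c_0$ (this is exactly how the paper chooses $c$). In particular there is no separate regime at $\rho\uparrow 1$ to worry about, and the entire last paragraph of your proposal --- refined scales, near-space-filling paths, costs of order $e^{-\Theta(\rho n)}$ --- is unnecessary. Your spurious factor $\delta^{-4/3}\rho^{-2/3}$ in the exponent was in fact a warning sign: it would make the sparse-visit bound \emph{improve} as the target density shrinks, which is not what Proposition~\ref{lemLambda} says.
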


Recall that the heuristic picture is that as $\rho$
goes to zero, under the law $\widetilde \Q_n^\rho$, both random walks should localize
 in a region of typical diameter 
 $(n/\rho)^{1/3}$, during
their whole time-period. Thus, the occupation density in
the localization region is expected to be of order $\rho$, and
\reff{appli-2} provides a precise statement of this picture. 
Let us stress that unlike Theorem \ref{theo-appli-2}, the proof of this second result does not rely on BBH's fine Large Deviation Principle, but only on relatively soft arguments. 

%However, in dimensions five and larger, the strategy should
%be time-inhomogeneous, and in the localisation region the
%density should be of order one, no matter how small is $\rho$.
%This does not permit a direct application of Proposition~\ref{lemLambda}.

Our main technical tool for proving both Theorems \ref{theo-appli-2} and \ref{theo-appli-1} is the following proposition, which 
allows us to estimate visits in a region of {\it low density} at a given
space scale $r$. 

\begin{proposition}\label{lemLambda}
Assume that $\Lambda$ is a subset of $\Z^d$ 
with the following property.
For some $\rho \in (0,1)$, and $r\ge 1$,
\begin{equation}\label{hypLambdaQ}
|\Lambda\cap Q(x,r)| \le \rho \cdot |Q(x,r)|, \quad \text{for all }x\in 2r\Z^d.
\end{equation}
There is a constant $\kappa>1$ independent of $r$, $\rho$, 
and $\Lambda$, such that for any $n\ge \rho^{\frac 2d-1}r^2$, 
and $t\ge \kappa \rho \, n$,
\begin{equation}\label{bruno-1}
\bP\left[|\RR_n\cap \Lambda|\ge t\right] \, 
\le\,  \exp\big(-\rho^{1-\frac 2 d}\, \frac{t}{2r^2}). 
\end{equation}
\end{proposition}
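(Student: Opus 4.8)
The plan is to bound the number of visits to $\Lambda$ by decomposing the walk into blocks of length $\sim r^2$ and exploiting the fact that on each such block the walk typically travels distance $\sim r$, hence can only collect $\lesssim \rho r^d$ points of $\Lambda$ from a bounded number of cubes $Q(x,r)$. First I would split the time interval $I_n$ into $m := \lceil n/r^2\rceil$ consecutive blocks $J_1,\dots,J_m$, each of length at most $r^2$. For block $J_k$, let $N_k := |\RR(J_k)\cap\Lambda|$ be the number of sites of $\Lambda$ visited during that block. The key deterministic observation is that $\RR(J_k)$ is contained in a ball of radius equal to the diameter of the block; if that diameter is at most $r$, then $\RR(J_k)$ meets at most $2^d$ cubes of the grid $2r\Z^d$, so by hypothesis \eqref{hypLambdaQ} we get $N_k \le 2^d\rho\,(2r)^d = C_d\,\rho\, r^d$ deterministically on that event. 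More robustly, if the block has diameter $\le L r$ then $N_k \le C_d\, L^d\, \rho\, r^d$. Since $|\RR_n\cap\Lambda| \le \sum_{k=1}^m N_k$, controlling the sum reduces to controlling the typical size of the blocks, and the random walk travels only distance $O(r)$ in time $r^2$ with overwhelming probability.

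The main step is then a Chernoff/exponential-moment argument on $\sum_k N_k$. Using the strong Markov property, the increments across blocks are not independent but can be handled by conditioning successively: given $\mathcal{F}_{J_1\cup\dots\cup J_{k-1}}$, the bound $N_k \le C_d\,\rho\, r^d\,(1 + (\text{diam of block } J_k)/r)^d$ holds, and the diameter of a simple-random-walk path of length $r^2$ has Gaussian tails, so $\mathbb{E}[\exp(\theta N_k)\mid \mathcal{F}_{<k}] \le \exp(C_d'\,\theta\,\rho\, r^d)$ for $\theta$ up to some constant times $r^{-2}$. Iterating over the $m$ blocks yields $\mathbb{E}[\exp(\theta |\RR_n\cap\Lambda|)] \le \exp(C_d'\,\theta\,\rho\, r^d\, m) \le \exp(C_d''\,\theta\,\rho\, n)$, using $m \le 2n/r^2$ together with $n \ge \rho^{2/d-1}r^2$ to absorb the additive $+1$ in each block (this is exactly the role of the hypothesis $n \ge \rho^{2/d-1}r^2$: it guarantees the "entropy" term $\rho r^d \cdot m$ from the geometric bound dominates the naive $r^2\cdot m$ worst case, so that the effective per-block contribution is $\rho r^d$ and the total is of order $\rho n$). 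A Markov inequality with $\theta$ chosen of order $r^{-2}$ then gives $\mathbb{P}[|\RR_n\cap\Lambda|\ge t] \le \exp(\theta(C_d''\rho n - t))$, and since $t \ge \kappa\rho n$ with $\kappa$ large, the factor $C_d''\rho n$ is at most $t/2$, producing $\exp(-\theta t/2)$; choosing $\theta = \rho^{1-2/d}/r^2$ (which is indeed $\lesssim r^{-2}$ since $\rho < 1$) yields exactly \eqref{bruno-1}.

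I expect the main obstacle to be making the per-block estimate uniform in $\rho$ and $r$ with the correct power $\rho^{1-2/d}$ rather than $\rho$. The subtlety is that the geometric bound $N_k \lesssim \rho r^d$ per block of length $r^2$ would naively suggest a rate $\rho^{-2/d}\cdot(\text{number of blocks the walk actually uses})$, and one must verify that over the full time $n$ the walk's range intersects $\Lambda$ in no more than $\sim \rho n$ sites in expectation while simultaneously the exponential rate in front of $t$ comes out as $\rho^{1-2/d}/r^2$ and not something weaker. Concretely: the expected number of visited $\Lambda$-cubes over time $n$ is $\sim n/r^d$ (the walk's range has volume $\sim n$ in $d\ge 3$... actually $\sim n$ for the whole path, spread over $\sim n/r^d$ cubes on scale $r$ only in the folded regime), and in each the walk picks up $\le \rho r^d$ points, recovering $\rho n$; but bounding the number of distinct cubes visited requires either a transience/Green's function input or, more simply, the crude block-counting above. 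The delicate bookkeeping is choosing $\theta$ and verifying $\theta\rho r^d m \le t/2$ simultaneously with $\theta = \rho^{1-2/d}/r^2$; this forces the precise hypothesis $n\ge \rho^{2/d-1}r^2$ and the largeness of $\kappa$, and getting all three constraints to close simultaneously — with constants independent of $r,\rho,\Lambda$ — is where the real care lies.
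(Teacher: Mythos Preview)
Your approach has a genuine arithmetic gap that cannot be repaired within the purely geometric framework you propose. The step ``$\exp(C_d'\theta\rho r^d m)\le \exp(C_d''\theta\rho n)$'' is simply false: with $m\approx n/r^2$ blocks one has $\rho r^d m=\rho r^{d-2}n$, which for $d\ge 3$ exceeds $\rho n$ by the unbounded factor $r^{d-2}$. Neither the hypothesis $n\ge \rho^{2/d-1}r^2$ nor any choice of $\kappa$ rescues this, because the discrepancy is in $r$ alone. (You can check that no other block length helps: with blocks of length $T$ the diameter is $\sim\sqrt T$, your geometric bound gives $N_k\lesssim \rho T^{d/2}$, and summing over $n/T$ blocks yields $\rho T^{d/2-1}n$, which beats $\rho n$ for every $T>1$.) The underlying reason is that the per-block bound $N_k\lesssim \rho r^d$ is far too crude: in $r^2$ steps the walk visits at most $r^2$ sites, so although it sits inside $O(1)$ cubes of volume $r^d$, its range is a sparse subset of those cubes, and its intersection with an \emph{arbitrary} density-$\rho$ set is typically of order $\rho^{2/d}r^2$, not $\rho r^d$. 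Seeing this requires a Green's-function input, not a volume count.

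The paper's proof supplies exactly this missing ingredient and also tunes the block length. It takes blocks of length $T=\lfloor r^2\rho^{2/d-1}\rfloor$ and uses Lemma~\ref{GreenV}, namely $\E[|\RR_T\cap\Lambda|\mid\FF]\le C(\rho^{2/d}r^2+\rho T)$; with this $T$ the two terms balance and the per-block conditional mean is $O(\rho T)$, so the sum over $n/T$ blocks is $O(\rho n)\le t/2$. Concentration is then obtained not by raw Chernoff but by McDiarmid's martingale inequality, using that each increment is deterministically bounded by $T$ and has conditional variance $O(T\log^2 T)$ (Lemma~\ref{lem-var}); this produces the exponent $t/(2T)=\rho^{1-2/d}t/(2r^2)$ directly. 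In short, the idea you are missing is that the correct per-block mean is $\rho^{2/d}r^2$ (a Green's-function estimate), not $\rho r^d$ (a volume estimate), and the block length must be chosen so that the two terms in Lemma~\ref{GreenV} match.
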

Note that in \reff{bruno-1} the smaller is the scale the
smaller is the probability. Note also that this result holds in any dimension $d\ge 3$.

Now some remarks on the limitation of our results are in order. 
Let us concentrate on 
Theorem~\ref{theo-appli-2} which is our main result. 
First the size of the time-window is constrained by the degree 
of precision in BBH's asymptotics. 
The fact that one can only consider windows of 
size order $\varepsilon n$, and not say order $\varepsilon^K n$, 
for some $K>1$, is related to the asymptotic  
of the rate function in the neighborhood 
of the origin, see \eqref{Imud} below. From 
\cite{BBH} one knows the first order term in dimension $3$. However, pushing further the precision of this  
asymptotic would allow to consider higher exponents $K$. On the other hand it would be even more  
interesting to allow time windows of smaller size, say of polynomial order $n^\kappa$, with $\kappa\in (2/3,1)$. Going below the exponent $2/3$ 
does not seem reasonable, as the natural belief is that strands of the path of length $n^{2/3}$ should typically move freely, 
and might visit from time to time regions with very low occupation 
density. Thus, we believe that a result in the same vein as
Theorem \ref{theo-appli-2} should hold for exponents 
$\kappa \in (2/3,1)$. One would need however a much better 
understanding of the speed of convergence in the 
Large Deviation Principle, see \eqref{BBH} below.

Similarly, one could ask whether our proof could show a kind of time inhomogeneity in dimension $5$ and higher. 
However, a problem for this 
is the following. Given two small time windows (say of order $\varepsilon n$), one would like to argue that the walk 
cannot visit regions with high occupation density in both, unless these two time windows were adjacent. However, even if they are not, the cost for 
the walk to come back at the origin at the beginning of each of them is only polynomially small, which is almost invisible when compared to the 
(stretched) exponentially small cost of the large deviations. Therefore obtaining such result seems out of reach at the moment.

Let us sketch the proof of Theorem~\ref{theo-appli-2}.
The first step, which reveals also the link between
\cite{BBH} and \cite{BBH2} is to show that 
$\{|\RR_n|-\E[|\RR_n|]< -\epsilon n\}$ implies large mutual intersections
$\{|\RR(I)\cap
\RR(I^c)|> \beta \epsilon |I|\}$ for some constant $\beta$ as
soon as the interval $I\subset [0,n]$ is large enough. 
This requires to show a LDP
on the same precision as \cite{BBH} for $\RR(I^c)$, where $I^c$
typically consists of two subintervals: this step presents some subtleties,
which we deal with by wrapping parts of the two trajectories, and use
that the intersection essentially increases under such operation.
Then one falls back on an estimate similar to Theorem~\ref{theo-appli-1}.

The rest of the paper is organized as follows. 
In Section~\ref{sec-tool} we gather useful results 
on the range: BBH's results, as well
as the upward large deviation principle of Hamana and Kesten \cite{HK},
and estimates on probability that the walk covers a 
fixed region with low density for a long time. We then prove 
Proposition~\ref{lemLambda} and Theorem \ref{theo-appli-1}.
In Section~\ref{sec-LD} we prove an extension of BBH's 
estimate when one considers two
independent walks starting from different positions. 
The proof of Theorem \ref{theo-appli-2} 
is concluded in Section~\ref{sec-theo2}.

\section{Visiting Sparse Regions}\label{sec-tool}
In this section, we prove our main tool, Proposition~\ref{lemLambda}, and then Theorem  \ref{theo-appli-1}, 
after we recall well known results.
\subsection{Preliminaries}
Dvoretzky and Erd\'os \cite{DE} established 
that there exists a constant $\kappa_d>0$, 
such that almost surely and in $L^1$, 
$$\lim_{n\to \infty}\ \frac{|\RR_n|}{n}= \kappa_d.$$
In addition, one has  
\begin{equation}\label{meanerror}
|\E[|\RR_n|] - \kappa_dn|= 
\left\{ \begin{array}{ll}
\OO(\sqrt n)& \text{when }d=3\\
\OO(\log n)&  \text{when }d=4\\
\OO(1)&  \text{when }d\ge 5.\\
\end{array}
\right.
\end{equation}
Precise asymptotic of the variance and a Central Limit Theorem 
are obtained by Jain and Orey \cite{JO} in dimensions $d\ge 5$, and
by Jain and Pruitt \cite{JP} in dimensions $d\ge 3$.

The analogue of the LDP of \cite{BBH} has been established
in the discrete setting by Phetpradap in his thesis \cite{P}, 
and reads as follows: 
there exists a function $I_d$, such that 
for any $\varepsilon \in (0,\kappa_d)$, 
\begin{equation}\label{BBH}
\lim_{n\to \infty}\, \frac{1}{n^{1-\frac 2d}}\log \bP\left[|\RR_n| - \E[|\RR_n|]\le -\varepsilon n \right] = -I_d(\varepsilon).
\end{equation}
Moreover, there exist positive constants $C$, $\mu_d$ and $\nu_d$, 
such that for $\varepsilon \in (0,\nu_d)$, 
\begin{equation}\label{Imud} 
\left.
\begin{array}{l}
\mu_3 \varepsilon^{2/3} \\
\mu_4\sqrt \varepsilon 
\end{array}
\right\}
\quad \le \, I_d(\varepsilon)\, \le \quad  \left\{
\begin{array}{ll}
\mu_3 \varepsilon^{2/3}(1+C\varepsilon) &  \text{when }d=3\\
\mu_4\sqrt \varepsilon (1+C \varepsilon^{1/3})  & \text{when }d= 4,
\end{array}
\right.
\end{equation}
and if $d\ge 5$, $I_d(\epsilon)=\mu_d \varepsilon^{1-\frac 2d}$.
These results were first obtained in \cite{BBH} for the Wiener sausage: 
the lower bounds are given by their Theorem 4 (ii) and Theorem 5 (iii), 
respectively for dimension $3,4$, and 
for dimension $5$ and higher (note that their constants $\mu_d$ differs from ours by a universal constant, see also \cite{P} for details). 
The upper bound in dimension $5$ and higher 
is also provided by their Theorem 5 (iii). 
The upper bound in dimension three and four is obtained 
in the course of the proof of Theorem 4 (ii), 
see their equations $(5.73)$, $(5.81)$ and $(5.82)$. 
Note that, as they use Donsker-Varadhan's large deviation theory,
their rate functions $I_d$ are given by variational formulas.

Now a basic fact we need is an estimate of the variance
of $|\RR_n\cap \Lambda|$ uniform on $\Lambda$.
\begin{lemma}\label{lem-var}
There exists a constant $C>0$, such that for any positive
integer $n$, and $\Lambda\subset \Z^d$, 
\begin{equation}\label{variance.range}
\text{Var}(|\RR_n\cap \Lambda|)\le C n\cdot \log^2 n.
\end{equation}
\end{lemma}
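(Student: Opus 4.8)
The plan is to use the Doob martingale for the filtration $\FF_k:=\sigma(S_0,\dots,S_k)$. Put $N:=|\RR_n\cap\Lambda|$ and, for $1\le k\le n$, $D_k:=\E[N\mid\FF_k]-\E[N\mid\FF_{k-1}]$; as $\FF_0$ is trivial and $\E[N\mid\FF_n]=N$, we get $N-\E[N]=\sum_{k=1}^nD_k$ with orthogonal increments, hence $\text{Var}(N)=\sum_{k=1}^n\E[D_k^2]$. One then computes $D_k$. Writing $\RR_n=\RR_{k-1}\cup\RR([k,n])$, the term $|\RR_{k-1}\cap\Lambda|$ is $\FF_{k-1}$-measurable and drops out, so $D_k=\E[|F_k|\mid\FF_k]-\E[|F_k|\mid\FF_{k-1}]$, where $F_k:=\RR([k,n])\cap A_k$ and $A_k:=\Lambda\setminus\RR_{k-1}$ is an $\FF_{k-1}$-measurable subset of $\Z^d$. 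By the Markov property at time $k$, with $m:=n-k$ and $g_m(a,b):=\bP_a[b\in\RR_m]$, one has $\E[|F_k|\mid\FF_k]=\varphi_k(S_k)$ where $\varphi_k(z):=\E_z[|\RR_m\cap A_k|]=\sum_{y\in A_k}g_m(z,y)$; thus $D_k=\varphi_k(S_k)-\E[\varphi_k(S_k)\mid\FF_{k-1}]$.

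Conditionally on $\FF_{k-1}$ the only randomness in $D_k$ is the increment $X_k$, so $S_k$ runs over the $2d$ neighbours of $S_{k-1}$, giving $\E[D_k^2\mid\FF_{k-1}]=\text{Var}_{X_k}(\varphi_k(S_{k-1}+X_k))\le C_d\max\{(\varphi_k(u)-\varphi_k(u'))^2:u\sim u',\ |u-S_{k-1}|\le 2\}$. The whole proof therefore reduces to the deterministic, potential-theoretic bound — uniform over all $A\subseteq\Z^d$, all $m\ge1$, and all neighbours $u\sim u'$ —
\[
\Bigl|\,\sum_{y\in A}\bigl(g_m(u,y)-g_m(u',y)\bigr)\Bigr|\ \le\ C\log(m+2),
\]
since then $\text{Var}(N)=\sum_{k=1}^n\E[D_k^2]\le C'\sum_{k=1}^n\log^2(n-k+2)\le C''\,n\log^2 n$.

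I expect this gradient estimate to be the main obstacle. One cannot pull the absolute value inside the sum, because $\sum_{y\in\Z^d}|g_m(u,y)-g_m(u',y)|$ is only of order $m^{(d-1)/2}$, which in $d=3$ is $\sqrt m$ — far too large; so one must exploit cancellation in the sign of $g_m(u,\cdot)-g_m(u',\cdot)$. The natural device is to write $g_m(a,b)=\bP_a[\tau_b<\infty]-\bP_a[m<\tau_b<\infty]$ and use the Green's function identity $\bP_a[\tau_b<\infty]=G(a-b)/G(0)$ with $G(z)\asymp|z|^{2-d}$ for the first term, and the local central limit theorem for the second, so that the singular leading parts of $g_m(u,y)$ and $g_m(u',y)$ cancel and only a logarithmically large residual survives when $d=3$ (for $d\ge5$ one gets $O(1)$, consistent with the known $O(n)$ variance there). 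This is also the place where any additional structure available on the set $\Lambda$ should be brought in. The remaining ingredients — the martingale identity, the one-step reduction, and the final summation — are routine.
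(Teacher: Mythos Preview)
Your one-step Doob martingale reduction is correct up to the gradient estimate, but that estimate is false in the generality you need. Since $\Lambda$ (and hence $A_k=\Lambda\setminus\RR_{k-1}$, up to the removal of at most $k$ points) is arbitrary, you are in effect asking for $\sup_{A}\bigl|\sum_{y\in A}h(y)\bigr|\le C\log m$, with $h(y):=g_m(u,y)-g_m(u',y)$. But the supremum is attained at $A=\{h>0\}$, and since $\sum_y h(y)=\E_u|\RR_m|-\E_{u'}|\RR_m|=0$ by translation invariance, it equals exactly $\tfrac12\|h\|_1$, which you yourself note is of order $\sqrt m$ (and this holds in every $d\ge 3$, so your parenthetical remark about $d\ge 5$ is also off). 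No splitting of $g_m$ into a Green-function part and a tail can help, because the worst $A$ turns the signed sum back into an absolute one. Concretely, with $\Lambda=\{y_1\ge 0\}$ one has $\varphi_k(z+e_1)-\varphi_k(z)=\E_z\bigl[|\RR_m\cap\{y_1=-1\}|\bigr]$, the expected number of range points on a single hyperplane, which is $\asymp\sqrt m$ whenever $|z_1|\lesssim\sqrt m$; this yields $\E[D_k^2]\asymp n-k$ and hence $\sum_k\E[D_k^2]\asymp n^2$.

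That same half-space example shows more: $|\RR_n\cap\Lambda|/n$ has a nondegenerate (arc-sine type) limit in law, so $\text{Var}(|\RR_n\cap\Lambda|)\asymp n^2$ and the bound cannot hold for \emph{all} $\Lambda$ as literally stated. In the paper the lemma is only invoked inside the proof of Proposition~\ref{lemLambda}, for sets obeying the density constraint~\eqref{hypLambdaQ}, and the reference to \cite[Lemma~6.2]{LG86} points to a direct second-moment computation via last-exit indicators rather than a one-step martingale bound. If you want to salvage your route, you must either bring in the density hypothesis on $\Lambda$ when bounding $\E[D_k^2]$, or abandon single-step increments in favour of time blocks; the uniform-in-$A$ gradient inequality you wrote is simply not available.
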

It is important to stress that $C$ is independent of $\Lambda$ and $n$.
This lemma is obtained using the same argument as in \cite[Lemma 6.2]{LG86}.

Upward large deviations are obtained
by Hamana and Kesten \cite{HK}. Their result implies 
that there exists a positive function $J_d$, such that for 
$\varepsilon\in (0,1-\kappa_d)$, 
\begin{equation}\label{HK}
\lim_{n\to \infty} \frac 1n \log \bP\left(|\RR_n| - \E[|\RR_n|] \ge \varepsilon n\right) = -J_d(\varepsilon). 
\end{equation}
Finally, an elementary fact we shall 
need is the following (see for instance Lemma 2.2 in \cite{AS17}).
\begin{lemma}\label{GreenV}
There exists a constant $C>0$, such that for any $\rho \in (0,1)$, $r\ge 1$, 
and $\Lambda\subset \Z^d$, satisfying \reff{hypLambdaQ}
one has for all $n\ge 1$, 
$$
\E[|\RR_n\cap \Lambda|]\le C(\rho^{2/d} r^2 + \rho n).
$$
\end{lemma}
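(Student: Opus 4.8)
\textbf{Proof plan for Lemma \ref{GreenV}.}

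The plan is to estimate $\E[|\RR_n\cap\Lambda|]$ by a Green's function computation, splitting according to the scale $r$ at which the sparsity hypothesis \reff{hypLambdaQ} is imposed. Write
\begin{equation*}
\E[|\RR_n\cap\Lambda|] = \sum_{x\in\Z^d}\bP[x\in\Lambda,\ x\in\RR_n] = \sum_{x\in\Lambda}\bP[\tau_x\le n],
\end{equation*}
where $\tau_x$ is the hitting time of $x$ by the walk started at the origin. For each $y$ visited by the walk before time $n$, it contributes to the above sum only through the sites of $\Lambda$ that are hit afterwards, so the standard device is to bound, for every $y\in\Z^d$,
\begin{equation*}
\E\Big[\#\{k\le n:\ S_k\in\Lambda\}\ \Big|\ S_j=y\text{ for some }j\le n\Big] \le \max_{y}\E_y\big[\#\{k\le n:\ S_k\in\Lambda\}\big],
\end{equation*}
and then $\E[|\RR_n\cap\Lambda|]\le \E_0[\#\{k\le n:\ S_k\in\Lambda\}]$ directly, since $\RR_n\cap\Lambda\subseteq\{k\le n:\ S_k\in\Lambda\}$ as a multiset bound is wasteful; better is to use that each site of $\RR_n\cap\Lambda$ is hit at least once, so $|\RR_n\cap\Lambda|\le \sum_{x\in\Lambda}\mathbf{1}\{\tau_x\le n\}$ and, by the strong Markov property at $\tau_x$,
\begin{equation*}
\bP[\tau_x\le n] \le \frac{\E_0\big[\#\{k\le n+? : S_k=x\}\big]}{G_{[0,n]}(x,x)} ,
\end{equation*}
which after the usual simplification gives $\E[|\RR_n\cap\Lambda|]\le \sum_{x\in\Lambda}\sum_{k=0}^n p_k(0,x)$, i.e. the expected local time of $\Lambda$.

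Next I would estimate $\sum_{k=0}^n p_k(0,x)$ by the truncated Green's function $G_n(x):=\sum_{k=0}^n p_k(0,x)$, which satisfies $G_n(x)\le C/(1+|x|^{d-2})$ for all $x$ (by the local CLT / standard heat kernel bounds, valid for $d\ge 3$), and also $G_n(x)\le C n/(1+|x|^{d})$ type tail control; more simply one uses $G_n(x)\le C\min(|x|^{2-d},\,n^{1-d/2})$ combined with $\sum_{k\le n}p_k(0,x)\le G(x)$. Summing over $\Lambda$:
\begin{equation*}
\E[|\RR_n\cap\Lambda|]\ \le\ \sum_{x\in\Lambda} G_n(x)\ \le\ C\sum_{x\in\Lambda}\frac{1}{1+|x|^{d-2}}.
\end{equation*}
Now decompose $\Lambda$ into cubes $Q(z,r)$, $z\in 2r\Z^d$: the sparsity hypothesis says $|\Lambda\cap Q(z,r)|\le \rho\,|Q(z,r)|=\rho(2r)^d$. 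For the $O(1)$ cubes near the origin (those within distance $O(r)$ of $0$) bound $\sum_{x\in\Lambda,|x|\le Cr} G_n(x)$ crudely: these cubes contain together at most $C\rho r^d$ points of $\Lambda$ but the Green's function is as large as $O(1)$ near $0$; a sharper bound keeps the full ball, giving $\sum_{|x|\le Cr}G_n(x)\le C r^2$, and then intersecting with $\Lambda$ loses a factor $\rho$ only on the volume, so this part is $\le C\rho^{2/d}r^2$ after optimizing (one packs $\rho (2r)^d$ points of $\Lambda$ as close to $0$ as possible, i.e. into a ball of radius $\sim\rho^{1/d}r$, where $\sum G_n\le C(\rho^{1/d}r)^2=C\rho^{2/d}r^2$). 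For the far cubes, on $Q(z,r)$ with $|z|\gtrsim r$ we have $G_n(x)\le C|z|^{2-d}$ uniformly on the cube, so the contribution is $\le C\rho(2r)^d|z|^{2-d}$, and summing the geometric-type series $\sum_{z\in 2r\Z^d,|z|\ge r,|z|\le C\sqrt n}\rho r^d|z|^{2-d}\le C\rho r^d\cdot r^{2-d}\cdot (\sqrt n/r)^{2}=C\rho n$ (the sum of $|z|^{2-d}$ over the lattice $2r\Z^d$ up to radius $\sqrt n$ behaves like $r^{2-d}\cdot(\sqrt n/r)^2\cdot r^{?}$ — carefully, $\sum_{z\in 2r\Z^d, r\le |z|\le R}|z|^{2-d}\sim r^{-d}\int_r^R \rho^{2-d}\,\rho^{d-1}d\rho = r^{-d}\cdot R^2$, so multiplied by $\rho r^d$ gives $\rho R^2\le C\rho n$). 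Adding the two ranges yields $\E[|\RR_n\cap\Lambda|]\le C(\rho^{2/d}r^2+\rho n)$.

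The main obstacle is getting the near-origin term to come out as $\rho^{2/d}r^2$ rather than the naive $r^2$: one must exploit that $\Lambda$, being sparse, cannot crowd its points against the singularity of the Green's function, so the worst case is $\Lambda$ occupying a ball of radius $\sim\rho^{1/d}r$ about $0$, where the expected local time is $O((\rho^{1/d}r)^2)$. This requires a short rearrangement/layer-cake argument: $\sum_{x\in\Lambda}G_n(x)\le\sum_{x\in\Lambda}G(x)$ and $G(x)$ is radially decreasing, so the sum is maximized when $\Lambda\cap B(0,Cr)$ is pushed toward $0$ subject to the per-cube density constraint; since the single cube $Q(0,r)$ already allows $\rho(2r)^d$ points but no more, and likewise for each neighboring cube, the optimal configuration fills the innermost cubes, i.e. a region of volume $\sim\rho r^d$, hence radius $\sim\rho^{1/d}r$. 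Once that geometric point is pinned down the rest is the routine Green's function summation sketched above, and one should double-check that the constraint $n\ge 1$ suffices (if $n$ is small, $\rho n$ may be tiny, but then $\sqrt n\lesssim r$ is possible and the far sum is empty, consistent with the bound). I would also note this is exactly the computation of \cite[Lemma 2.2]{AS17} cited in the statement, so in the paper one would simply reference it; the sketch above records why it holds.
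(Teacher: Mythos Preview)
The paper itself gives no proof of this lemma; it simply cites \cite[Lemma~2.2]{AS17}, as you yourself observe at the end of your sketch. Your argument is the standard one and is essentially what appears in \cite{AS17}: bound $\E[|\RR_n\cap\Lambda|]$ by the expected local time $\sum_{x\in\Lambda}G_n(x)$, then split spatially. For $|x|\lesssim r$ the density constraint caps $|\Lambda\cap B(0,Cr)|$ at $O(\rho r^d)$, and since $G(x)\asymp (1+|x|)^{2-d}$ is (comparable to) a radially decreasing function, a rearrangement shows the sum is maximized by packing those points into a ball of radius $\sim\rho^{1/d}r$, giving $O(\rho^{2/d}r^2)$. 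For $|x|\gtrsim r$ one groups by cubes $Q(z,r)$, uses $G_n(x)\le C|z|^{2-d}$ on each, and sums the shells out to $|z|\sim\sqrt n$ to get $O(\rho n)$; the Gaussian tail of $G_n$ beyond $\sqrt n$ is handled by the same cube decomposition and contributes another $O(\rho n)$.

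Two minor points of presentation. First, the opening detour through $\bP[\tau_x\le n]\le \E[\#\{k\le n:S_k=x\}]/G_{[0,n]}(x,x)$ is unnecessary: the one-line inequality $|\RR_n\cap\Lambda|\le\ell_n(\Lambda)$ already gives $\E[|\RR_n\cap\Lambda|]\le\sum_{x\in\Lambda}G_n(x)$. Second, your far-cube computation is right but written hesitantly; the clean version is that on the shell $|z|\sim kr$ there are $\sim k^{d-1}$ cubes, each contributing $\lesssim \rho r^d (kr)^{2-d}$, so the shell gives $\rho r^2 k$, and summing $k\le\sqrt n/r$ yields $\rho n$. With those tidied up the proof is complete and matches the cited reference.
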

\subsection{Proof of Proposition~\ref{lemLambda}}
Let $T:=\lfloor r^2/\rho^{1-\frac 2d}\rfloor$, 
and $R_j:=|\RR[jT,jT+T]\cap \Lambda|$. Note first that 
\begin{equation}\label{Prop.1}
|\RR_n\cap \Lambda|\le \sum_{j=0}^{\lfloor n/T\rfloor+1} R_j.
\end{equation}
Now, consider the martingale $(M_\ell)_{\ell \ge 0}$, defined by
$$
M_\ell:= \sum_{j=0}^\ell 
\Big( R_j- \E\left[R_j\mid \FF_{jT}\right]\Big).$$
By choosing $\kappa\ge 8C$, with $C$ as in Lemma~\ref{GreenV},  
we deduce from this Lemma, that for any $t\ge \kappa \rho\, n$, 
and $\rho n\ge \rho^{2/d}r^2$ that
$$
\sum_{j=0}^{\lfloor n/T\rfloor +1}  \E\left[R_j\mid \FF_{jT}\right] \le t/2.
$$
Hence, using \eqref{Prop.1} we get 
$$
\bP\left[|\RR_n\cap \Lambda|\ge t\right] \ 
\le\ \bP\left[M_{\lfloor k/T\rfloor +1}\ge t/2\right].
$$
Moreover, the increments of the martingale $(M_\ell)_{\ell \ge 0}$ are bounded by $T$, 
and by \eqref{variance.range} their conditional variance is 
$\mathcal O(T\log^2 T)$ (uniformly in $\Lambda$). 
Thus, McDiarmid's concentration 
inequality (see Theorem 6.1 in \cite{CL}), gives 
$$ 
\bP\left[M_{\lfloor n/T\rfloor+1}\ge t/2\right]\ \le\ \exp(-\frac{t}{2T}),
$$
by taking larger $\kappa$ if necessary. 
This proves the desired result. 
\qed

\subsection{Proof of Theorem~\ref{theo-appli-1}}\label{sec-theo1}
Fix $\rho\in (0,1)$ and $\delta \in (0,1)$. We proved in \cite{AS17} a lower bound for visiting a set
of density $\rho$ in dimension three: 
Proposition 4.1 of \cite{AS17} indeed establishes that
for some positive constants $c$ and $c'$,  
\begin{equation*}\label{lower-1}
\bP\big[ |\RR_n\cap \Lambda|> \rho |\Lambda|\big]\ge
\exp(-c'\rho^{2/3} n^{1/3}),\quad\text{for any }
\Lambda\subset B(0,c(n/\rho)^{1/3}).
\end{equation*}
By forcing one of the two walks to stay inside the desired ball,
we deduced that for some positive constant $c_0$ we have
the following rough bound on the intersection of two random walks:
\begin{equation}\label{lower-2}
\bP\big[ |\RR_n\cap \widetilde \RR_n| > \rho n\big]\ge
\exp(-c_0\, \rho^{2/3} n^{1/3}).
\end{equation}
Now, the (random) set $\RR_n\cap \V_n^c(\delta \rho, r)$
satisfies the hypothesis \reff{hypLambdaQ} of Proposition~\ref{lemLambda}
with density $\delta\rho$, thus giving with the constant $\kappa$ of this proposition 
\begin{equation*}\label{lower-3}
\bP\big[ |\RR_n\cap \widetilde\RR_n\cap \V_n^c(\delta \rho, r) |> \kappa\, \delta\rho\, n\big]
\le \exp\big(- (\delta \rho)^{1/3} \frac{\kappa\,  \delta\rho\, n}
{2 r^2}\big).
\end{equation*}
The proof follows after we observe that this probability becomes negligible, compared to the one in \eqref{lower-2}, if we choose $r$, satisfying
\begin{equation*}\label{lower-4}
r\le \big( \frac{\kappa}{2c_0}\big)^{1/2} 
\delta^{2/3}(\rho n)^{1/3}.
\end{equation*}

\section{Large deviation estimate for two random walks}\label{sec-LD}
In this section we prove an extension of \eqref{BBH}, when one considers two independent random walks starting from (possibly) different positions. 
We show that the upper bound in \eqref{BBH} still holds, up to a negligible factor, uniformly over all possible starting positions. 
While this result could presumably be also obtained by following carefully the proof of \cite{BBH}, 
we have preferred to follow here an alternative way and deduce it directly from \eqref{BBH}, using no heavy machinery.  
We state the result for dimension three only, since this is the case of interest for us here, but note that a similar result could be proved in any dimension $d\ge 3$, using exactly the same proof.

For $x\in \Z^3$, we denote by $\bP_{0,x}$ the law of two independent random walks $S$ and $\widetilde S$ starting respectively from the origin and $x$. We write $\RR$ and $\widetilde \RR$ for their ranges. Furthermore, for an integrable random variable $X$, we set $\overline X:=X-\E[X]$, and for $x\in \Z^3$, we denote by $\|x\|$ its Euclidean norm. 

\begin{proposition}\label{propBBH} Assume that $d= 3$. 
There exists $\varepsilon_0>0$, such that for any $\varepsilon \in (0,\varepsilon_0)$, for $n$ large enough, and $k\le n$, 
$$\sup_{\|x\|\le n^{2/3}}\, \bP_{0,x}\left[\overline{|\RR_k\cup \widetilde \RR_{n-k}|}\le - \varepsilon n \right] \le \exp\left(-I_3( \frac{\varepsilon}{1+\varepsilon^2})n^{1/3}\right).$$ 
\end{proposition}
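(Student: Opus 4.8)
The plan is to transplant the two–walk downward deviation onto a \emph{single} random walk and then quote \eqref{BBH}. Write $R:=|\RR_k\cup\widetilde\RR_{n-k}|$ and $M:=\E_{0,x}[R]-\varepsilon n$. Two independent ranges in $\Z^3$ have $\OO(\sqrt n)$ expected intersection, so by \eqref{meanerror}, $\E_{0,x}[R]=\E[|\RR_n|]+\OO(\sqrt n)$; more generally, $I_3$ is continuous and strictly increasing, hence $I_3(\varepsilon)>I_3\big(\tfrac{\varepsilon}{1+\varepsilon^2}\big)$ for every $\varepsilon\in(0,\kappa_3)$, and this strict gap will absorb every error of the argument: any $o(n)$ loss in a threshold (in particular $\OO(\sqrt{n\log n})$), any polynomial factor in $n$, and any additive term $\exp(-cn^{1/3})$ with $c$ a universal constant, all become harmless once $\varepsilon<\varepsilon_0$ and $n$ is large. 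Fix a small $\delta=\delta(\varepsilon)>0$ and split into three regimes of $(k,x)$.

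First the two \emph{side} regimes, handled without gluing. If $k\le\delta n$ (the case $n-k\le\delta n$ being symmetric), then $R\ge|\widetilde\RR_{n-k}|$ and $M\le\E[|\widetilde\RR_{n-k}|]+\kappa_3 k-\varepsilon n+\OO(\sqrt n)\le\E[|\widetilde\RR_{n-k}|]-(\varepsilon-\kappa_3\delta-o(1))n$, so \eqref{BBH} applied to a walk of length $n-k\ge(1-\delta)n$ gives $\bP_{0,x}[R\le M]\le\exp\!\big(-(1-\delta)^{1/3}I_3(\varepsilon-\kappa_3\delta-o(1))\,n^{1/3}\big)$, whose exponent tends to $I_3(\varepsilon)$ as $\delta\downarrow0$, hence is $\ge I_3(\tfrac{\varepsilon}{1+\varepsilon^2})n^{1/3}$ for $\delta$ small. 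Next, if $k,n-k\ge\delta n$ and $\|x\|_\infty\ge\Lambda\sqrt{n\log n}$ with $\Lambda$ large, then off an event of probability $\le n^{-10}$ the ranges lie in the disjoint cubes $Q(0,\tfrac{\Lambda}{2}\sqrt{n\log n})$ and $Q(x,\tfrac{\Lambda}{2}\sqrt{n\log n})$, so $R=|\RR_k|+|\widetilde\RR_{n-k}|$ and $\{R\le M\}$ forces $(\E|\RR_k|-|\RR_k|)+(\E|\widetilde\RR_{n-k}|-|\widetilde\RR_{n-k}|)\ge\varepsilon n$, an event in independent coordinates; distributing $\varepsilon n$ over a fine grid and using \eqref{BBH} for each walk, the exponent is at least $(1-o(1))\min_{a\in[0,1],\,k}\big(I_3(\tfrac{a\varepsilon n}{k})k^{1/3}+I_3(\tfrac{(1-a)\varepsilon n}{n-k})(n-k)^{1/3}\big)$, and since $\lambda\mapsto I_3(\lambda)\asymp\lambda^{2/3}$ is concave this minimum sits at $a\in\{0,1\}$, $k=(1-\delta)n$, equalling $(1-o(1))\,I_3\big(\tfrac{\varepsilon}{1-\delta}\big)(1-\delta)^{1/3}n^{1/3}\to I_3(\varepsilon)n^{1/3}$, again $\ge I_3(\tfrac{\varepsilon}{1+\varepsilon^2})n^{1/3}$ for $\delta$ small.

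The heart is the remaining regime $k,n-k\ge\delta n$ and $\|x\|_\infty\le\Lambda\sqrt{n\log n}$. Off an event of probability $\le e^{-cn^{1/3}}$ (using $\|x\|\le n^{2/3}$) one has $\|S_k\|_\infty\le\Lambda\sqrt{n\log n}$; on this event, form the walk that runs the first walk for $k$ steps from $0$, then a \emph{deterministic} path of a fixed length $L=\OO(\sqrt{n\log n})$ (parity $\sum_ix_i-k$) from $S_k$ to $x$, then the second walk for $n-k$ steps from $x$; its range $\RR^+$ obeys $\RR_k\cup\widetilde\RR_{n-k}\subseteq\RR^+\subseteq(\RR_k\cup\widetilde\RR_{n-k})\cup\{\text{path}\}$, so $R\ge|\RR^+|-L-1$. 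The key point is that swapping this deterministic path for \emph{any} other $L$-step path from $S_k$ to $x$ moves $|\RR^+|$ by at most $L+1$ (both versions lie between $|\RR_k\cup\widetilde\RR_{n-k}|$ and $|\RR_k\cup\widetilde\RR_{n-k}|+L+1$); hence $\bP[|\RR^+|\le M+L+1]\le\bP\big[|\RR(\widehat S)|\le M+2L+2\mid\widehat S_{k+L}=x\big]$ for an \emph{honest} simple random walk $\widehat S$ of length $n^+:=n+L$, because, given $\widehat S_{k+L}=x$ and coupling its first $k$ and last $n-k$ increments to those of the two walks, its range is $\RR_k\cup\{\text{a random }L\text{-bridge}\}\cup\widetilde\RR_{n-k}$. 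Since $k+L\ge\delta n$ and $\|x\|\le\Lambda\sqrt{n\log n}$, the local limit theorem gives $1/\bP_0[\widehat S_{k+L}=x]\le n^{C}$, and \eqref{BBH} applied to $\widehat S$, whose deviation below $\E|\RR_{n^+}|$ is $\varepsilon n+\OO(\sqrt{n\log n})=\varepsilon n^+(1-o(1))$, yields $n^{C}\exp\!\big(-(I_3(\varepsilon)-o(1))n^{1/3}\big)\le\exp\!\big(-I_3(\tfrac{\varepsilon}{1+\varepsilon^2})n^{1/3}\big)$ for $n$ large.

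The main obstacle is exactly this last regime: one must re-glue the two independent trajectories into a single honest walk while losing only $o(n)$ in volume — which compels the connecting path to be short, hence $\|x\|$ to be small, which is why the side regimes must be peeled off first — and paying only a subexponential factor for the re-rooting, which is what the hypotheses $\|x\|\le n^{2/3}$ and $k\ge\delta n$ ensure. The pointwise "short-path" sandwich and the local limit theorem make the reduction to \eqref{BBH} lossless up to the budget $I_3(\varepsilon)-I_3(\tfrac{\varepsilon}{1+\varepsilon^2})>0$, into which the $\OO(\sqrt n)$ mean discrepancies, the polynomial re-rooting cost, the $e^{-cn^{1/3}}$ exceptional events, and the $o(1)$ of \eqref{BBH} are all absorbed.
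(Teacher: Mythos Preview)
Your approach has a genuine gap in handling the regime where $\|x\|$ lies between $\Lambda\sqrt{n\log n}$ and $n^{2/3}$. In your ``far $x$'' regime you bound the probability that the two ranges fail to lie in disjoint cubes by $n^{-10}$ and simply add this as an error term. But the target bound is $\exp\big(-I_3(\tfrac{\varepsilon}{1+\varepsilon^2})\,n^{1/3}\big)$, which for every fixed $\varepsilon>0$ decays faster than any polynomial in $n$; hence $n^{-10}$ (or $n^{-C}$ for any fixed $C$) dominates the target for large $n$ and cannot be absorbed. You yourself note at the outset that only additive errors of the form $\exp(-cn^{1/3})$ are harmless --- polynomial additive errors are not. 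The same defect recurs in your main regime: the event $\{\|S_k\|_\infty > \Lambda\sqrt{n\log n}\}$ has probability of order $n^{-c\Lambda^2}$ (standard Gaussian concentration gives $\exp(-c\Lambda^2\log n)$, not $\exp(-cn^{1/3})$; the parenthetical ``using $\|x\|\le n^{2/3}$'' plays no role here since $S_k$ does not depend on $x$), so discarding it is again illegitimate.

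This is not a technicality but the heart of the matter. For $\|x\|$ of order $n^{2/3}$, any gluing of the two walks into a single honest walk via a bridge of length $L\le n$ incurs a cost $1/\bP[S_L=x]\asymp\exp(c\|x\|^2/L)\ge \exp(cn^{1/3})$ with a \emph{universal} $c>0$; meanwhile your budget $I_3(\varepsilon)-I_3(\tfrac{\varepsilon}{1+\varepsilon^2})\asymp\varepsilon^{8/3}$ shrinks to zero with $\varepsilon$ and cannot pay for it. Likewise the disjointness argument cannot be sharpened to the required precision: a walk of length $\Theta(n)$ stays inside a cube of side $\Lambda\sqrt{n\log n}$ only with polynomial probability. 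The paper confronts exactly this obstacle and resolves it by a \emph{reflection argument}: for $\varepsilon^{5/2}n^{2/3}\le\|x\|\le n^{2/3}$ one finds a hyperplane separating $0$ and $x$ across which the two walks make only $O(\varepsilon^3 n^{1/3})$ ``long'' excursions, and reflects those excursions (together with the initial strand of $\widetilde S$) to replace $x$ by some $x'$ with $\|x'\|\le\rho\|x\|$ for a fixed $\rho<1$, at multiplicative cost merely $2^{O(\varepsilon^3 n^{1/3})}$ and with an $O(\varepsilon^{-13}n^{1/3})$ shift in the threshold. Iterating $O(\log(1/\varepsilon))$ times brings $\|x\|$ down to the small range where your bridge argument (which is in spirit the paper's Step~1) does succeed. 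As a minor additional point, in your bridge step the correct conditioning is on $\widehat S_{k+L}-\widehat S_k=x$ rather than on $\widehat S_{k+L}=x$: the latter tilts the law of the first $k$ increments, so the claimed coupling of those increments with the free walk $S$ is not exact as stated.
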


\begin{proof}
Set $m:=\lfloor \varepsilon^2 n\rfloor$. First using \eqref{BBH}, we know that 
\begin{align*}
\lim_{n\to\infty}\, \frac{1}{n^{1/3}} \log \bP\left(\overline{|\RR_{n+m}|}\le -\varepsilon(1-\varepsilon^3) n\right)=-I_3(\frac{\varepsilon(1-\varepsilon^3)}{1+\varepsilon^2})(1+\varepsilon^2)^{1/3}.
\end{align*}
Now consider $x$, with $\|x\|\le \varepsilon^{5/2} n^{2/3}$, 
and $k\le n$. Note that by \eqref{meanerror}, we get for $\varepsilon>0$,  
\begin{equation*}
\overline{|\RR[0,n+m]|}\le 
\overline{|\RR[0,k]\cup \RR[k+m,n+m]|} + 
\overline{|\RR[k,k+m]} + \mathcal O(\sqrt n).
\end{equation*}
Therefore, at least for $n$ large enough, 
\begin{align*}
&\bP\left(\overline{|\RR_{n+m}|}\le -\varepsilon(1-\varepsilon^3) n\right)\\
&\ge\ \bP\left( \overline{|\RR_k\cup \RR[k+m,n+m]|}\le -\varepsilon n, \overline{|\RR[k,k+m]|} 
\le \varepsilon^5 n, S_{k+m} - S_k=x \right)\\
&\ge\ \bP_{0,x}\left( \overline{|\RR_k\cup \widetilde \RR_{n-k} |}\le -\varepsilon n\right) \cdot \bP\left(\overline{|\RR_m|} \le \varepsilon^5 n, \, S_m =x \right),
\end{align*}
using the Markov property and reversibility of the random walk for the last inequality.

Now using Hamana and Kesten bound \eqref{HK}, 
the local central limit theorem (see Theorem 2.3.11 in \cite{LL}), 
and that $\|x\|\le \varepsilon^{5/2} n^{2/3}$, we get that for some constant $c>0$ (independent of $x$), and for all $n$ large enough, 
\begin{align*}
 \bP\left(\overline{|\RR_m|} \le \varepsilon^5 n, S_{m} =x \right)&\ge \bP(S_{m} = x) - \bP(\overline{|\RR_{m}|} \ge \varepsilon^5 n) \\
 &\ge \exp(-c \varepsilon^3 n^{1/3}) - \exp(-J_d(\varepsilon^3)(1-\varepsilon) \varepsilon^2n )\\
 &\ge \frac 12 \exp(-c\varepsilon^3 n^{1/3}).
\end{align*}
Moreover, it follows from \eqref{Imud}, that for $\varepsilon$ small enough 
$$I_3( \frac{\varepsilon(1-\varepsilon^3)}{1+\varepsilon^2})(1+\varepsilon^2)^{1/3} - c\varepsilon^3 \ge I_3( \frac{\varepsilon}{1+\varepsilon^2})(1+\frac 14\varepsilon^2).$$
Therefore, for $\varepsilon$ small enough, and then for all $n$ large enough, 
\begin{equation}\label{petitx}
\sup_{\|x\|\le \varepsilon^{5/2}n^{2/3}}  \bP_{0,x}\left( \overline{|\RR_k\cup \widetilde \RR_{n-k} |}\le -\varepsilon n\right) \le 
\exp\left(-I_3( \frac{\varepsilon}{1+\varepsilon^2})(1+\frac 15\varepsilon^2) n^{1/3}\right).
\end{equation}
It remains to consider $x$ satisfying $\varepsilon^{5/2} n^{2/3}\le \|x\|\le n^{2/3}$. 
Our strategy is to show that there exists a constant $\rho\in (0,1)$, such that for any such $x$, 
the probability of the event 
\begin{equation}\label{A}
\mathcal A:=\{\overline{|\RR_k\cup \widetilde \RR_{n-k}|}\le - \varepsilon n\},
\end{equation}
under $\bP_{0,x}$ is bounded by the probability of the same event under $\bP_{0,x'}$, with $x'$ satisfying $\|x'\|\le \rho \|x\|$, up to some negligible error term. Applying this estimate at most order $\log (1/\varepsilon)$ times, and using \eqref{petitx}, the result obtains.

Fix $x$ such that $\varepsilon^{5/2} n^{2/3}\le \|x\|\le n^{2/3}$, 
and assume, without loss of generality, that its largest coordinate 
in absolute value is the first one, say $x_1$, and that it is positive. 
In this case $\|x\|\ge x_1 \ge \|x\|/\sqrt 3$. 
Recall next that we consider two random walks $S$ and $\widetilde S$, 
one starting from the origin and running up to time $k$,
 and the other one starting from $x$ and running up to time $n-k$.  
For $x_1/3\le y\le 2x_1/3$, consider the hyperplane $\mathcal H_y$ of vertices having first coordinate equal to $y$. 
Denote by $N_y$ the number of excursions out of $\mathcal H_y$, made by one of the two walks $S$ or $\widetilde S$, 
which hit the hyperplane $\mathcal H_{y+[\varepsilon^{-10}]}$. Note that one can order them by time of arrival, considering first those made by $S$ 
and then those made by $\widetilde S$. 
Let $N'_y$ be the number of excursions 
among the first $N_y\wedge (2\varepsilon^3 n^{1/3})$ previous ones, which spend a time at least $\varepsilon^{-17}$ in the region 
between $\mathcal H_y$ and $\mathcal H_{y+[\varepsilon^{-10}]}$. Note that for a single excursion,  
the probability to hit $\mathcal H_{y+[\varepsilon^{-10}]}$ in less than $\varepsilon^{-17}$ steps, is of order $\exp(-c\varepsilon^{-3})$, for some constant $c>0$. By independence between the first $2\varepsilon^3 n^{1/3}$ excursions, we deduce that 
\begin{equation}\label{Ny'}
\bP_{0,x}(N'_y\le \varepsilon^3 n^{1/3}, N_y\ge 2\varepsilon^3 n^{1/3})\le \exp(- c n^{1/3}),
\end{equation}
for some possibly smaller constant $c>0$. On the other hand, let $T_y$ be the cumulated total time spent by $S$ and $\widetilde S$ 
in the region between $\mathcal H_y$ and $\mathcal H_{y+[\varepsilon^{-10}]}$. 
Observe that the number of levels $y$ between $x_1/3$ and $2x_1/3$ which are integer multiples of $[\varepsilon^{-10}]$ is of order $\varepsilon^{10} x_1/3$, and that the latter is (at least for $\varepsilon$ small enough) larger than $\varepsilon^{13}n^{2/3}$. Thus, for at least one such $y$, one must have both $N'_y \le \varepsilon^3 n^{1/3}$ and $T_y\le \varepsilon^{-13} n^{1/3}$ (otherwise the total time spent in the region between the hyperplanes $\mathcal H_{x_1/3}$ and $\mathcal H_{2x_1/3}$ would exceed $n$).     
Using \eqref{Ny'}, we deduce that 
$$\bP_{0,x}\left(N_y\ge 2\varepsilon^3 n^{1/3}\text{ or }T_y\ge\varepsilon^{-13}n^{1/3},  \quad \text{for all } y\in \{x_1/3,\dots,2x_1/3\}\right)\le x_1\exp(-cn^{1/3}),$$
for some constant $c>0$. Then as a consequence of the pigeonhole principle, there exists (a deterministic) $y_0\in  \{x_1/3,\dots,2x_1/3\}$, such that 
$$\bP_{0,x}(N_{y_0}\le 2\varepsilon^3 n^{1/3}, T_{y_0}\le \varepsilon^{-13}n^{1/3}, \mathcal A)\ge \frac 1{x_1}\bP_{0,x}(\mathcal A)-\exp(-cn^{1/3}),$$
with the event $\mathcal A$ as defined in \eqref{A}.

Denote now by $x'$ the symmetric of $x$ with respect to $\mathcal H_{y_0}$. First observe that since $x_1\ge \|x\|/\sqrt 3$, and $x_1/3\le y_0\le 2x_1/3$, there exists $\rho\in (0,1)$ (independent of $x$), such that $\|x'\|\le \rho \|x\|$. 
Next recall that any excursion out of $\mathcal H_{y_0}$ and its symmetric with respect to $\mathcal H_{y_0}$ have the same probability to happen, and similarly for the first part of the trajectory of $\widetilde S$, up to the hitting time, say $\tau$, of $\mathcal H_{y_0}$, under the law $\bP_{0,x'}$. 
Moreover, by reflecting (with respect to $\mathcal H_{y_0}$) all the excursions out of $\mathcal H_{y_0}$ which hit 
$\mathcal H_{y_0+[\varepsilon^{-10}]}$, plus $\widetilde S[0,\tau]$, 
one can increase the size of the range by at most $T_{y_0}$. Therefore,  
$$\bP_{0,x'}(N_{y_0}=0, \overline{|\RR_k\cup \widetilde \RR_{n-k}|}\le - \varepsilon n+\varepsilon^{-13}n^{1/3} ) \ge 2^{-2\varepsilon^3 n^{1/3}} \bP_{0,x}(N_{y_0}\le 2\varepsilon^3 n^{1/3}, T_{y_0}\le \varepsilon^{-13}n^{1/3}, \mathcal A).$$
Combining the last two displays we conclude that for $n$ large enough, 
$$\bP_{0,x}(\mathcal A) \le  2^{4\varepsilon^3 n^{1/3}}\bP_{0,x'}\left(\overline{|\RR_k\cup \widetilde \RR_{n-k}|}\le - \varepsilon n+\varepsilon^{-13}n^{1/3} \right) + \exp(-cn^{1/3}),$$
for some (possibly smaller) constant $c>0$. 
Repeating the same argument $(5/2)\log \varepsilon/(\log \rho)$ times, and using \eqref{petitx} and \eqref{Imud}, we obtain the desired result. 
\end{proof}

\section{Proof of Theorem \ref{theo-appli-2}}\label{sec-theo2}
Let $k\le\ell \le n$ be given, satisfying $\ell - k = \lfloor K_0\varepsilon\, n\rfloor$, with  
$K_0$ a constant to be fixed later. Write $I=\{k,\dots,\ell\}$. Using that  
$$
\overline{|\RR_n|}=\overline{|\RR(I)|}+
 \overline{|\RR(I^c)|} - \overline{|\RR(I)\cap \RR(I^c)|},
$$
we have, with $\nu:=1-\frac{K_0\varepsilon}{3}$, 
\begin{equation}
\label{R1R2}
\mathbb P(\overline{|\RR_n|} \le -\varepsilon n)\ \le\ \mathbb P(\overline{|\RR(I)|} +  \overline{|\RR(I^c)|}\le -\nu \varepsilon n) 
+ \mathbb P(|\RR(I)\cap \RR(I^c)| \ge \frac {K_0}3 \varepsilon^2 n).
\end{equation}
We start by showing that the first probability on the right-hand side is negligible 
(when compared to the probability on the left-hand side). For this let 
$N=\lfloor \varepsilon^{-2}\rfloor$, and for $i=0,\dots, N$, let $\alpha_i := i\varepsilon^2$. Then, note that for $n$ large enough, 
\begin{align}\label{R1R2.2}
\nonumber & \mathbb P(\overline{|\RR(I)|} +  \overline{|\RR(I^c)|} 
\le -\nu \varepsilon n)\ \le\  \sum_{i=0}^N \mathbb P\left(\overline{|\RR(I)|}\le -\nu \alpha_i \varepsilon n,  \overline{|\RR(I^c)|}\le -\nu(1-\alpha_{i+1})\varepsilon n\right)\\
\nonumber&+\mathbb P\left(\overline{|\RR(I^c)|}\le -\nu \varepsilon n\right)
\le\  \sum_{i=0}^N \sum_{\|x\|\le n^{2/3}} \mathbb P\left(\overline{|\RR(I)|}\le -\nu \alpha_i \varepsilon n, \, \overline{|\RR(I^c)|}\le -\nu (1-\alpha_{i+1})\varepsilon n,\, S_{\ell } - S_k= x\right)\\
&\qquad +\mathbb P\left(\overline{|\RR(I^c)|}\le -\nu \varepsilon n\right) + \mathcal O(\exp(-cn^{1/3})), 
\end{align}
using Chernoff's bound for the last inequality (see for instance Theorem 3.1 in \cite{CL}). 
Now applying the Markov property and using Proposition \ref{propBBH} and \eqref{BBH}, we get that for any $i\le N$, there exists $n_i\ge 1$, such that for all $n\ge n_i$, 
\begin{align}\label{R1R2.3}
\nonumber &\sum_{\|x\|\le n^{2/3}}  \mathbb P\left(\overline{|\RR(I)|}\le -\nu \alpha_i \varepsilon n, \, \overline{|\RR(I^c)|}\le -\nu(1-\alpha_{i+1})\varepsilon n,\, 
S_{\ell} - S_k = x\right)\\
\nonumber & \le \ \sum_{\|x\|\le n^{2/3}}  \mathbb P\left(\overline{|\RR(I)|}\le -\nu \alpha_i \varepsilon n,\, S_{\ell} -S_k= x\right) \cdot \exp\left(-I_3(\frac{\varepsilon_i}{1+\varepsilon_i^2}) (1-K_0\varepsilon)^{1/3}n^{1/3}\right)\\
&\le \exp\left(-I_3(\widetilde \varepsilon_i) (1-\varepsilon^2) (K_0\varepsilon n)^{1/3}-I_3(\frac{\varepsilon_i}{1+\varepsilon_i^2}) (1-K_0\varepsilon)^{1/3}n^{1/3}\right),
\end{align}
with 
$$\varepsilon_i := \nu \frac{(1-\alpha_{i+1})\varepsilon}{1-K_0\varepsilon},\qquad \text{and}\qquad \widetilde \varepsilon_i:=\frac{\nu \alpha_i}{K_0}.
$$
Note that by choosing larger $n_i$ if necessary, one can also assume that \eqref{R1R2.2} holds for $n\ge n_i$.  
Note furthermore that by \eqref{Imud}, the first term in the exponential in \eqref{R1R2.3} is already larger than $2I_3(\varepsilon)n^{1/3}$, when $\alpha_i\ge K_0\sqrt \varepsilon$, at least provided $K_0$ is large enough and $\varepsilon$ small enough. 
Thus in the following one can assume that $\alpha_i\le K_0\sqrt \varepsilon$. We will also assume that $\varepsilon$ is small enough, so that $K_0\sqrt \varepsilon<1/2$. 
Then, using in particular that $\alpha_{i+1} = \alpha_i + \varepsilon^2$, we get 
(recall that $\nu=1-\frac{K_0\varepsilon}{3}$)
\begin{align*}
&I_3(\widetilde \varepsilon_i) (1-\varepsilon^2) (K_0\varepsilon)^{1/3}+I_3(\frac{\varepsilon_i}{1+\varepsilon_i^2}) (1-K_0\varepsilon)^{1/3}\\
&\ge \ \mu_3(1-\frac{K_0\varepsilon}{3})^{2/3}\left( \frac {\alpha_i^{2/3}}{K_0^{1/3}\varepsilon^{1/3}} + \frac{(1-\alpha_i)^{2/3}}{(1-K_0\varepsilon)^{1/3}}\right)\varepsilon^{2/3} - \mathcal O(\varepsilon^2).
\end{align*}
Now we claim that the bound in the parenthesis above reaches its infimum when $i=0$, or equivalently when $\alpha_i=0$. 
To see this, it suffices to consider the variations of the function $f$ defined for $u\in (0,1)$ by $f(u)=c_1^{1/3}u^{2/3} + c_2^{1/3}(1-u)^{2/3}$, with $c_1=(K_0\varepsilon)^{-1}$,  
and $c_2= (1-K_0\varepsilon)^{-1}$. 
A straightforward computation shows that $f'(u)>0$ on $(0,u_0)$,   
with $u_0= 1-K_0\varepsilon$. Since we assumed that $u_0>1/2>K_0\sqrt \varepsilon$, this proves our claim. By taking $K_0=100 C$, 
with $C$ the constant appearing in the upper bound of $I_3$ in \eqref{Imud}, one deduces that for $\varepsilon$ small enough, 
\begin{align*}
I_3(\widetilde \varepsilon_i) (1-\varepsilon^2) (K_0\varepsilon)^{1/3}+I_3(\frac{\varepsilon_i}{1+\varepsilon_i^2}) (1-K_0\varepsilon)^{1/3}
 \ge \ \mu_3(1+\frac{K_0\varepsilon}{10}) \varepsilon^{2/3} - \mathcal O(\varepsilon^2) \ge I_3(\varepsilon) (1+\frac{K_0\varepsilon}{20}). 
\end{align*}
As a consequence, letting $N(\varepsilon):=\max_i n_i$, we get that for $\varepsilon$ small enough, for all $n\ge N(\varepsilon)$, 
$$\sum_{i=0}^N \mathbb P\left(\overline{|\RR(I)|}\le -\nu \alpha_i \varepsilon n,  \overline{|\RR(I^c)|}\le -\nu(1-\alpha_{i+1})\varepsilon n\right) 
\le \exp(- I_3(\varepsilon)(1+\frac{K_0\varepsilon}{20})n^{1/3}).$$
One can obtain similarly the same bound for the term $\mathbb P(\overline{|\RR(I^c)|}\le -\nu \varepsilon n)$, also for all $n\ge N(\varepsilon)$, possibly by taking a larger constant $N(\varepsilon)$ if necessary. Finally using again \eqref{BBH}, we get that for all $\varepsilon$ small enough,
\begin{align}\label{R1R2.4}
\mathbb P(\overline{|\RR(I)|} +  \overline{|\RR(I^c)|} 
\le -\nu \varepsilon n) = o\left(\frac 1n\mathbb P(\overline{|\RR_n|}\le -\varepsilon n)\right).
\end{align}
It remains to estimate the second term in the right-hand side of \eqref{R1R2}. This is similar to the proof of Theorem \ref{theo-appli-1}. Assume given $1\le r\le \varepsilon^{5/6} n^{1/3}$, and $\beta>0$, whose value will be made more precise in a moment. 
To simplify notation, 
write $\RR_1=\RR[0,k]$ and $\RR_2:=\RR[\ell,n]$. Next set 
$$
\Lambda_1 = \cup_{x\in \mathcal C_1} Q(x,r),\qquad \text{and}\qquad \Lambda_2= \cup_{x\in \mathcal C_2} Q(x,r),$$
with 
$$\mathcal C_1:=\{x\in 2r\Z^d\ : \ |Q(x,r)\cap \RR_1|\ge \beta \varepsilon r^d\}, \quad \text{and}\quad \mathcal C_2:=\{x\in 2r\Z^d\ : \ |Q(x,r)\cap \RR_2|\ge \beta \varepsilon r^d\}.$$
Since $\RR(I^c)=\RR_1 \cup \RR_2$, one has 
$$
|\RR(I)\cap \RR(I^c)|\le |\RR(I)\cap \RR_1|+|\RR(I)\cap \RR_2|,
$$ 
and therefore, 
$$\mathbb P(|\RR(I)\cap \RR(I^c)| \ge \frac{K_0}{3}\varepsilon^2n)\le \mathbb P(|\RR(I)\cap \RR_1| \ge \frac{K_0}{6}\varepsilon^2n)+\mathbb P(|\RR(I)\cap \RR_2| \ge \frac{K_0}{6}\varepsilon^2n).$$
Both terms on the right-hand side are treated similarly. 
We first fix $\beta< 1/(24\kappa)$, with $\kappa$ the constant appearing in statement of Lemma \ref{lemLambda}. 
Then applying Lemma \ref{lemLambda} with $\rho=\beta \varepsilon$, 
$n=\lfloor K_0\varepsilon n\rfloor $, and $t=\frac{K_0}{24}\varepsilon^2 n$, we get (using also the Markov property at time $k$), 
\begin{align*}
\mathbb P(|\RR(I)\cap \RR_1\cap \Lambda_1^c | \ge \frac{K_0}{24}  \varepsilon^2 n)\ &\le\  \exp(-(\beta \varepsilon)^{1/3} \frac{K_0\varepsilon^2n}{48 r^2})\\
&\le \ \exp(- \frac{\beta^{1/3}K_0}{48} \varepsilon^{2/3} n^{1/3}),
\end{align*}
using that $r\le \varepsilon^{5/6} n^{1/3}$, for the last inequality. By taking larger $K_0$ if necessary, one can ensure that this bound is 
$o((1/n)\cdot\mathbb P( \overline{|\RR_n|}\le -\varepsilon n))$. This way we obtain 
\begin{equation}\label{R1R2.5}
\mathbb P(|\RR(I)\cap \RR_1\cap \Lambda_1^c | \vee |\RR(I)\cap \RR_2\cap \Lambda_2^c |\ge \frac{K_0}{24}  \varepsilon^2 n) =o\left(\frac 1n \mathbb P( \overline{|\RR_n|}\le -\varepsilon n)\right).
\end{equation}
Coming back to \eqref{R1R2}, dividing both sides of the inequality by the term on the left-hand side, and using \eqref{R1R2.4} and \eqref{R1R2.5}, 
we get that for all $\varepsilon$ small enough, 
$$\mathbb Q_n^\varepsilon\left(|\RR(I)\cap \Lambda_1\cap \RR_1|\ge \frac{K_0}{8}\varepsilon^2 n,\quad \text{ or }\quad |\RR(I)\cap \Lambda_2\cap \RR_2|\ge \frac{K_0}{8}\varepsilon^2 n\right)\ge 1-o\left(\frac 1n\right).$$
Since both
$$
\Lambda_1\cap \RR_1\ \subseteq \ \V_n(\beta \varepsilon,r)
\cap \RR(I^c),\quad \text{and}\quad 
\Lambda_2\cap \RR_2\ \subseteq\ \V_n(\beta \varepsilon,r)
\cap \RR(I^c),
$$
we get 
$$\mathbb Q_n^\varepsilon\left(|\RR(I)\cap \RR(I^c) \cap \V_n(\beta \varepsilon,r)|\ge \frac{K_0}{8}\varepsilon^2 n\right)\ge 1-o\left(\frac 1n\right).$$
The proof of Theorem \ref{theo-appli-2} follows by a union bound, since there are at most $n$ intervals $I$ of fixed length in $I_n$.  \hfill $\square$

\end{document}